\newtheorem{thm}{Theorem}
\newtheorem{cor}[thm]{Corollary}
\newtheorem{prop}{Proposition}[section]
\newtheorem{lem}[prop]{Lemma}
\theoremstyle{definition}
\newtheorem{rem}{Remark}[section]
\numberwithin{equation}{section}
\def\be{\begin{equation}}
\def\ee{\end{equation}}
\def\eps{\varepsilon}
\font\TenEns=msbm10
\font\SevenEns=msbm7
\font\FiveEns=msbm5
\def\Ens{\fam\Ensfam\TenEns}
\def\R{{\Ens R}}
\begin{document}

\title[Blow-up of critical norm]
{Optimal condition for blow-up of the critical $L^q$ norm \\ for the semilinear heat equation}

\subjclass[2010]{35K58, 35B40, 35B44, 35B33.}%
\keywords{Semilinear heat equation, critical $L^q$ norm, type~I blow-up, self-similar profile}

\author[Mizoguchi]{Noriko Mizoguchi}%
\address{Department of Mathematics, Tokyo Gakugei University,
Koganei, Tokyo 184-8501, Japan }
\email{mizoguti@u-gakugei.ac.jp}

\author[Souplet]{Philippe Souplet}%
\address{Universit\'e Paris 13, Sorbonne Paris Cit\'e,
CNRS UMR 7539, Laboratoire Analyse, G\'{e}om\'{e}trie et Applications,
93430 Villetaneuse, France}
\email{souplet@math.univ-paris13.fr}

\begin{abstract} We shed light on a long-standing open question 
 for the semilinear heat equation $u_t = \Delta u + |u|^{p-1} u$.
Namely, without any restriction on the exponent $p>1$ nor on the smooth domain~$\Omega$, we prove that the critical $L^q$ norm blows up 
whenever the solution undergoes {\it type~I~blow-up.}
A~similar property is also obtained for the local critical $L^q$ norm near any blow-up point.

In view of recent results of existence of type~II blow-up solutions with bounded critical $L^q$ norm, 
which are counter-examples to the open question, 
our result  seems to be essentially the best possible result in general setting.
This close connection between type I blow-up and critical  $L^q$  norm blow-up appears to be a completely new observation.

Our proof is rather involved and requires the combination of various ingredients. It is based on analysis in similarity variables 
and suitable rescaling arguments, 
combined with {\it backward uniqueness and unique continuation properties} for parabolic equations. 

As a by-product,  we obtain the nonexistence of self-similar profiles in the critical $L^q$ space.
Such properties were up to now only known for $ p \le p_S $ and in radially symmetric case for $ p > p_S $, 
where $ p_S $ is the Sobolev exponent. 
\end{abstract}

\maketitle


\section{Introduction and main results}

\medskip

\subsection{Background}
We consider the semilinear heat equation
\be\label{pbmP}
\left\{
\begin{array}{llll}
\hfill u_t &=&\Delta u+|u|^{p-1}u, &\quad t>0,\ x\in \Omega, \\
\noalign{\vskip 1mm}
\hfill u&=&0, &\quad t>0,\ x\in \partial\Omega, \\
\noalign{\vskip 1mm}
\hfill u&=&u_0(x), &\quad t>0,\ x\in\Omega,
\end{array}
\right.
\ee 
where $p>1$ and  $\Omega$ is a smooth (possibly unbounded) domain of $\R^N$ with $N\ge 1$.
For any $u_0\in L^\infty(\Omega)$,
problem~\eqref{pbmP} has a unique, maximal classical solution $u$.
We denote by $T=T(u_0)\in (0,\infty]$ its maximal existence time.
If $T<\infty$, then blow-up occurs, in the sense that
$$\lim_{t\to T} \|u(t)\|_\infty=\infty.$$
Here and hereafter, we denote by $ \| \cdot \|_q $ the norm of $ L^q (\Omega) $ for $ 1 \le q \le \infty $.
The notation is also extended to $q\in (0,1)$ (although this is not a norm).
In all this article, we use the notation
$$p_S:=(N+2)/(N-2)_+,\qquad \beta=1/(p-1)$$
and
$$q^*=N(p-1)/2$$
(note that we allow values of $q^*<1$).

\medskip

The study of the well-posedness of problem \eqref{pbmP} in Lebesgue spaces goes back to the late 1970's-early 1980's.
As is well known, for nonlinear PDE's in general, such properties are not only interesting in themselves, but also play an important role
in the qualitative analysis of solutions (global existence, blow-up behavior); see, e.g. the monograph \cite{QS07} and 
the references therein.
The space $L^{q^*}$ is invariant under the natural scaling of the equation, given by
$$u(t,x)\mapsto \lambda^{2\beta} u(\lambda^2t,\lambda x),\quad \lambda>0,$$
and it is known that the exponent $q^*$ is critical for local well-posedness in $L^q$ spaces and for the blow-up of $L^q$ norms. Namely,
for $q\ge 1$, we have:

\vskip 2pt
$\bullet$ If $q>q^*$, then problem \eqref{pbmP} with $u_0\in L^q(\Omega)$ is well posed, locally in time, 
and the existence time is uniformly positive for $u_0$ in bounded
sets of $L^q(\Omega)$ (see \cite{W80}, \cite{BC96}). As a consequence, we have
$$ 
T<\infty\quad \Longrightarrow \quad\lim_{t\to T}\|u(t)\|_q=\infty\ \hbox{ for all $q>q^*$.}
$$ 

\vskip 2pt
$\bullet$ If $1\le q<q^*$, then problem \eqref{pbmP} with $u_0\in L^q(\Omega)$ is ill posed (both local existence and local uniqueness may fail; 
see \cite{W80}, \cite{HW82}, \cite{CDNW17})
and moreover (see \cite{FML85}), there exist solutions such that  $T<\infty$ and 
$$\sup_{t\in (0,T)}\|u(t)\|_q<\infty.$$ 

\vskip 2pt
$\bullet$ If $q=q^*>1$, then problem \eqref{pbmP} with $u_0\in L^q(\Omega)$ is still well posed (see \cite{W80}, \cite{BC96}), 
but the existence time is no longer uniformly positive for $u_0$ in bounded sets of $L^q(\Omega)$ (see \cite{BC96}),
so that blow-up of the critical  $L^q$  norm when $T<\infty$, even in the sense
\be\label{BUqsupcrit}
\sup_{t\in(0,T)}\|u(t)\|_{q^*}=\infty,
\ee
cannot be deduced from the local existence theory.

\vskip 1pt

The {\it critical  $L^q$  norm blow-up problem}, i.e.~the question whether \eqref{BUqsupcrit} is true when $T<\infty$,
has attacted a lot of attention in the past decades, but is still not fully understood.
The following sufficient conditions for \eqref{BUqsupcrit} when $T<\infty$ are known:

\begin{itemize}
\vskip 1pt
 \item[(a)] $p=1+\frac{4}{N}$ (cf.~\cite{Bre0} and see also \cite[Proposition~16.3]{QS07});
\vskip 2pt
 \item[(b)]  $p>1+\frac{2}{N}$ and $u_t\ge 0$ (see \cite{W83}, \cite{Mat98});
\vskip 2pt
\item[(c)]  $p=1+\frac{2}{N}$, $u\ge 0$, and $\Omega=\R^N$ or $\Omega$ bounded and $u$ bounded near $\partial\Omega$ (see \cite{QSb2}); 
\vskip 2pt
 \item[(d)] $1<p<p_S$ and $\Omega$ convex (see \cite{GK87});
\vskip 2pt
 \item[(e)]  $p=p_S$, $\Omega=B_R$ or $\R^N$, and $u\ge 0$ radially symmetric (this follows from \cite{FHV}, \cite{MM03});
\vskip 2pt
 \item[(f)]  $p>p_S$, $\Omega=B_R$ or $\R^N$, and $u$ radially symmetric (see \cite{Mat98}, \cite{mm_jfa09}).
\vskip 1pt
\end{itemize}
 Here and hereafter we denote by $ B_r(a) $  the ball  with radius $ r $ centered at $ a $ in $\R^N$, and simply by $ B_r $ in the case
$ a = 0 $. 
Up to recently, no counter-examples to property \eqref{BUqsupcrit} were known,
and it was somehow conjectured that \eqref{BUqsupcrit} should be true for any blow-up solution of problem~\eqref{pbmP}.
However, it turns out that this is {\it not} the case: some special blow-up solutions constructed in \cite{schweyer_jfa12}, \cite{dpmw18} 
do satisfy
\be\label{noBUqcrit}
\sup_{t\in(0,T)}\|u(t)\|_{q^*}<\infty.
\ee
 Since  the $ L^{q^*} $ behavior of the solutions  was not considered there, we give the proof in Appendix. 
This concerns the case $p=p_S$ with $N=4$ or $5$ (see \cite{schweyer_jfa12}, \cite{dpmw18}, respectively)
and includes radial as well as nonradial situations. 
 A key feature of these special solutions is that they undergo {\it type~II blow-up}.
Let us recall that blow-up is said to be of type~I if
\be\label{typeIBU}
\limsup_{t\to T} \ (T-t)^\beta \|u(t)\|_\infty<\infty,
\ee
and type~II otherwise.
Type~I blow-up is consistent with the natural scaling of the equation.
In particular, if \eqref{typeIBU} holds, then the blow-up rate is actually comparable to that of the corresponding ODE, namely
$$C_1\le (T-t)^\beta \|u(t)\|_\infty\le C_2, \ \quad t\in (0,T),$$
for some constants $C_1, C_2>0$. Sufficient conditions ensuring type~I blow-up are summarized in Remark~\ref{Rem1}(i) below.

In view of the above results, a natural question is then:
\be\label{newconj}
\hbox{Does the critical   $L^q$ norm blow up whenever blow-up is type~I ?}
\ee
We will show that \eqref{newconj} is indeed true in full generality, for any $p>1$ and without any restriction
on $\Omega$ or on the sign of $u$.
Moreover, we will show that the same property is true for the {\it local} critical   $L^q$ norm near any blow-up point. 
Recall that $a\in \overline\Omega$ is said to be a blow-up point of $u$ if 
$\limsup_{t\to T,\,x\to a}|u(t,x)|=\infty$.

\medskip

\subsection{Main results}

Our main  theorem  is the following.

\goodbreak

\begin{thm}\label{thm1}
Let $\Omega$ be any smooth domain of $\R^N$, $p>1$ and $u_0\in L^\infty(\Omega)$. 
Let $u$ be a type~I blow-up solution of~\eqref{pbmP}.
 Then the following holds. 
\begin{itemize}
 \item[(i)]  We  have 
 $$\lim_{t\to T}\|u(t)\|_{L^{q*}(\Omega)}=\infty.$$
 \item[(ii)]  For any blow-up point $a\in \overline\Omega$ and any $r>0$, we have
$$\lim_{t\to T}\int_{\Omega\cap B_r(a)} |u(t,x)|^{q^*}dx=\infty.$$

\end{itemize}
\end{thm}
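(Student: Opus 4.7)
Since any blow-up solution admits at least one blow-up point $a$ and the local integral of $|u|^{q^*}$ is dominated by the global one, assertion (i) follows from (ii). I argue (ii) by contradiction. Assume that for some blow-up point $a\in\overline\Omega$ and some $r>0$ there is a sequence $t_n\uparrow T$ with $\int_{\Omega\cap B_r(a)}|u(t_n,x)|^{q^*}dx\le M$. Introduce the similarity variables centered at $a$,
$$w(s,\xi)=(T-t)^{\beta}u\bigl(t,\,a+(T-t)^{1/2}\xi\bigr),\qquad s=-\log(T-t),$$
on the dilated domain $\Omega_s=(T-t)^{-1/2}(\Omega-a)$, which expands either to $\R^N$ or to a half-space. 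Then $w$ satisfies $w_s=\Delta w-\tfrac12\xi\cdot\nabla w-\beta w+|w|^{p-1}w$; the type~I assumption gives $\|w(s,\cdot)\|_\infty\le C$ uniformly in $s$; and the critical identity $q^{*}\beta=N/2$ makes the change of variables $L^{q^*}$-isometric, so that for every $R>0$ and all large $n$,
$$\int_{\Omega_{s_n}\cap B_R}|w(s_n,\xi)|^{q^*}d\xi =\int_{\Omega\cap(a+(T-t_n)^{1/2}B_R)}|u(t_n,x)|^{q^*}dx\le M.$$

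Combining parabolic $C^{2+\alpha}_{loc}$ estimates with the monotonicity of the Giga--Kohn Lyapunov functional
$$E[w]=\int\Bigl(\tfrac12|\nabla w|^2+\tfrac{\beta}{2}w^2-\tfrac{1}{p+1}|w|^{p+1}\Bigr)e^{-|\xi|^2/4}\,d\xi,$$
I expect $w(s_n,\cdot)$ to converge along a subsequence, in $C^{2}_{loc}$, to a bounded stationary solution $W$ of the rescaled equation
$$\Delta W-\tfrac12\xi\cdot\nabla W-\beta W+|W|^{p-1}W=0.$$
Fatou's lemma transfers the uniform local $L^{q^*}$ bound to the limit, so $W\in L^{q^*}$ with $\|W\|_{q^*}\le M$.

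The crux is then to prove that no nontrivial bounded self-similar profile can lie in $L^{q^*}$. Heuristically, any such profile has asymptotic behavior dictated by the linearization: either it tends to the constant $\beta^{\beta}$ or it decays like $|\xi|^{-2\beta}$, and in the latter case $|W|^{q^*}\sim|\xi|^{-N}$ is exactly borderline non-integrable. Membership in $L^{q^*}$ must therefore suppress the critical $|\xi|^{-2\beta}$ tail, and a unique continuation argument for the weighted elliptic operator should then force $W\equiv 0$. This is precisely the nonexistence statement advertised as a by-product in the abstract.

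Once $W\equiv 0$ is established, $w(s_n,\cdot)\to 0$ locally uniformly. Interpreted back in the original variables via $u_{\lambda_n}(\tau,y)=\lambda_n^{2\beta}u(t_n+\lambda_n^2\tau,\,a+\lambda_n y)$ with $\lambda_n=(T-t_n)^{1/2}$, this gives rescaled solutions of the same equation that are small on a fixed parabolic cylinder. The backward uniqueness and unique continuation properties for parabolic equations then propagate this smallness into a neighborhood of $(T,a)$, forcing $u$ to remain regular there and contradicting the fact that $a$ is a blow-up point. I expect the two main obstacles to be (a)~the nonexistence of critical $L^{q^*}$ profiles for arbitrary $p>1$ and arbitrary geometry, extending previous results limited to $p\le p_S$ or to radial settings; and (b)~the delicate application of parabolic backward uniqueness near a point where the solution is singular, which should require a careful rescaling to move the analysis into a regime where standard backward uniqueness theorems apply.
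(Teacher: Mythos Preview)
Your outline contains two genuine gaps, and in fact inverts the logical structure of the paper.

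\textbf{First gap: energy monotonicity.} You invoke the Giga--Kohn Lyapunov functional $E[w]$ to force convergence of $w(s_n,\cdot)$ to a stationary profile. That monotonicity argument requires the boundary term $\int_{\partial\Omega_s}\rho\,|\nabla w|^2\,(\xi\cdot\nu)$ to have a sign, which is only guaranteed when $\Omega$ is convex. Since the theorem is stated for \emph{any} smooth domain, this step fails in general. The paper explicitly avoids energy arguments and instead uses a weighted $L^1_\rho$ smallness criterion (Proposition~\ref{prop21}), based on delayed smoothing for the Ornstein--Uhlenbeck semigroup, which is insensitive to the geometry of $\partial\Omega$.

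\textbf{Second gap: circularity on the profile nonexistence.} Your ``crux'' is the assertion that no nontrivial bounded solution of $\Delta W-\tfrac12\xi\cdot\nabla W-\beta W+|W|^{p-1}W=0$ lies in $L^{q^*}$. This is exactly Corollary~\ref{cor2}, and the paper obtains it \emph{as a consequence} of Theorem~\ref{thm1}, not as an input. Your heuristic (tail dichotomy between $\beta^\beta$ and $|\xi|^{-2\beta}$, then unique continuation) is not a proof: the asymptotic dichotomy is not known for nonradial profiles when $p>p_S$, and ``suppressing the borderline tail implies $W\equiv 0$'' is precisely the hard statement.

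\textbf{How the paper actually proceeds.} Rather than passing to a stationary limit of the similarity equation, the paper rescales differently: with $\lambda_n=(T-t_n)^{1/2}$ it sets $v_n(s,y)=\lambda_n^{2\beta}u(T+\lambda_n^2 s,\lambda_n y)$ for $s\in(-2,0)$, so that each $v_n$ solves the \emph{original} semilinear heat equation. A subsequence converges to a solution $v$ on $(-2,0)\times D$. Two facts are then established: (a)~by the concentration lemma (Lemma~\ref{lem31}), $\int_{B_{k_1}}|v(s)|^{q^*}\ge\eta_1>0$ for $s\in(-1,0)$; (b)~since the finite-time trace $U=\lim_{t\to T}u(t,\cdot)$ lies in $L^{q^*}$ near $a$, one has $v_n(0,y)=\lambda_n^{2\beta}U(\lambda_n y)\to 0$ in $L^{q^*}_{loc}$ away from the origin, so $v(0,y)=0$ for $|y|\ge K_1$. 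The uniform estimates needed to make sense of $v_n(0,\cdot)$ and $v(0,\cdot)$ for large $|y|$ come from the $\varepsilon$-regularity criterion of Proposition~\ref{prop21}, not from energy. Backward uniqueness for the parabolic equation in a half-space then gives $v\equiv 0$ for $|y|\ge K_1$, and unique continuation extends this to all of $D$, contradicting~(a). Note that backward uniqueness is applied to the time-dependent limit $v$, not to a stationary profile; this is how the paper sidesteps the profile classification entirely.

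Finally, your reduction of (i) to (ii) assumes a blow-up point exists; when $\Omega$ is unbounded this needs justification (ruling out blow-up at spatial infinity), which the paper handles via Proposition~\ref{prop21}(ii).
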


In view of the above mentioned counter-examples, 
Theorem~\ref{thm1} seems to be essentially the best possible result in general setting,
and this close connection between type I blow-up and critical   $L^q$ norm blow-up appears to be a completely new observation.
On the other hand, an important special class of blow-up solutions of \eqref{pbmP} consists of (classical) backward
self-similar solutions.
These are of the form 
\be\label{selfsimsol}
u(t,x)=(T-t)^{-\beta}\varphi\Bigl({x\over\sqrt{T-t}}\Bigl),\ \quad t\in[0,T),\ x\in \R^N,
\ee
where $\varphi\in C^2(\R^N)$ is a nontrivial bounded function, called the (backward self-similar)
profile.
A direct computation shows that $u$ in \eqref{selfsimsol} is a solution of 
 \eqref{pbmP} if and only if $\varphi$ solves the elliptic equation
\be\label{profileeqn}
\Delta\varphi -{y\over 2}\cdot\nabla\varphi+|\varphi|^{p-1}\varphi-\beta\varphi=0,\qquad y\in\R^N.
\ee
Moreover, for any backward self-similar solution $u$, we have $\|u(t)\|_{L^{q*}(\R^N)}=\|\varphi\|_{L^{q*}(\R^N)}\ (\le\infty)$ 
for all $t\in[0,T)$.
As a by-product of Theorem~\ref{thm1}, we thus have the following consequence for all possible backward
self-similar profiles.

\begin{cor}\label{cor2}
 Let $N\ge 1$  and $p>1$. 
Let $\varphi\in C^2(\R^N)$ be a nontrivial, bounded classical solution of \eqref{profileeqn}.
Then 
\be\label{profilenotLq}
\varphi\not\in L^{q*}(\R^N).
\ee
\end{cor}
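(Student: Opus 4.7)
The plan is to deduce Corollary~\ref{cor2} directly from Theorem~\ref{thm1}(i) by constructing, from a hypothetical nontrivial bounded profile $\varphi$, a canonical type~I blow-up solution on $\Omega=\R^N$ whose critical $L^q$ norm is constant in time, and then ruling out $\varphi\in L^{q^*}$ by contradiction.

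First I would fix $T>0$ arbitrary, set
$$u(t,x):=(T-t)^{-\beta}\varphi\bigl(x/\sqrt{T-t}\bigr),\qquad t\in[0,T),\ x\in\R^N,$$
and let $u_0(x):=u(0,x)=T^{-\beta}\varphi(x/\sqrt{T})$. Since $\varphi\in C^2(\R^N)\cap L^\infty(\R^N)$, we have $u_0\in L^\infty(\R^N)$, and the function $u$ is $C^{2,1}$ on $[0,T)\times\R^N$ and satisfies the PDE classically (this is the content of the equivalence between \eqref{profileeqn} and \eqref{selfsimsol} recalled in the excerpt). By uniqueness of the classical solution of \eqref{pbmP} on $\R^N$ with initial data $u_0\in L^\infty$, $u$ coincides with the maximal solution emanating from $u_0$ up to the (common) existence time.

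Next I would check that $T$ is exactly the maximal existence time $T(u_0)$ and that blow-up is of type~I. Since $\varphi$ is nontrivial and bounded,
$$\|u(t)\|_\infty=(T-t)^{-\beta}\|\varphi\|_\infty,$$
which shows simultaneously that $\|u(t)\|_\infty\to\infty$ as $t\to T^-$ (hence $T(u_0)\le T$, so $T(u_0)=T$ by the preceding step) and that
$$(T-t)^\beta\|u(t)\|_\infty=\|\varphi\|_\infty<\infty,$$
so that \eqref{typeIBU} is satisfied; thus $u$ is a type~I blow-up solution on the smooth (unbounded) domain $\R^N$.

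Finally, a change of variable $y=x/\sqrt{T-t}$ together with the key identity $\beta q^*=N/2$ (which follows from $q^*=N(p-1)/2$ and $\beta=1/(p-1)$) gives the scale invariance
$$\|u(t)\|_{L^{q^*}(\R^N)}^{q^*}=(T-t)^{-\beta q^*+N/2}\|\varphi\|_{L^{q^*}(\R^N)}^{q^*}=\|\varphi\|_{L^{q^*}(\R^N)}^{q^*}$$
for every $t\in[0,T)$. If $\varphi\in L^{q^*}(\R^N)$, the right-hand side would be a finite constant, directly contradicting Theorem~\ref{thm1}(i) applied to $u$ on $\Omega=\R^N$. Hence \eqref{profilenotLq} must hold. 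There is no real obstacle in this argument once Theorem~\ref{thm1} is granted; the only point requiring a moment of care is verifying that the self-similar ansatz indeed produces a bona fide type~I blow-up solution in the $L^\infty$ framework of Theorem~\ref{thm1}, which is what the first two steps above settle.
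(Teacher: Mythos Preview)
Your proof is correct and follows exactly the approach indicated in the paper: construct the backward self-similar solution \eqref{selfsimsol}, observe that it is a type~I blow-up solution with $\|u(t)\|_{L^{q^*}(\R^N)}=\|\varphi\|_{L^{q^*}(\R^N)}$ for all $t$, and apply Theorem~\ref{thm1}(i). The paper merely states this reasoning in the paragraph preceding Corollary~\ref{cor2}; your write-up supplies the routine verifications (that $T(u_0)=T$ and that blow-up is type~I) that the paper leaves implicit.
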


Let us recall (cf.~\cite{GK85}) that for $p\le p_S$, the only bounded backward
 self-similar profiles are the constants $\varphi=\pm \kappa$ 
with $\kappa=\beta^\beta$
 (which implies conclusion \eqref{profilenotLq} in that case).
For $N\ge 3$ and 
$$p_S<p<p_L:=1+\frac6{(N-10)_+},$$ there exist nonconstant, radial bounded profiles
(see \cite{Lep88}, \cite{BQ89}, \cite{Lep90}),
and they are known to behave like $r^{-2/(p-1)}$ as $r\to\infty$, so that \eqref{profilenotLq} is in particular  true. 
As for nonradial profiles, they seem quite difficult to study for $p>p_S$. 
The conclusion \eqref{profilenotLq} is therefore significant.

\medskip

\begin{rem}  \label{Rem1}
(i) {\bf Type~I blow-up.} Blow-up is known to be type~I whenever $1<p<p_S$ and $\Omega$ is convex
(see \cite{GK87}, \cite{GMS03}, \cite{GMS04b}),
or $p>1$ and $u_t\ge 0$ (cf.~\cite{FML85}, \cite{QSb2}). 
For positive solutions, this remains true for any $\Omega$ (possibly nonconvex) if either $N=2$ or $p<N(N+2)/(N-1)^2$ $(<p_S)$;
see \cite{PQS2}, \cite{q_ma16}. However it is still unclear if this is true in the full range $(1,p_S)$ for nonconvex domains.

Concerning the supercritical range  in the Sobolev sense,  it is known (see \cite{MM03}, \cite{MizJDE}) that for radial solutions, blow-up is always type~I 
in a ball and under some conditions in $\R^N$ for $p_S<p<p_{JL}$, 
where 
$$p_{JL}:=1+4\frac{N-4+2\sqrt{N-1}}{(N-2)(N-10)_+}.$$ 
In the supercritical and critical range  in the Sobolev sense,  type I blow-up is also known to occur for some solutions which 
are neither radial nor increasing in time:
see \cite{crs_mams17pr} for $N=3$, $p>p_S$,
\cite{mrs17} for $N=4$, $p>5$,
\cite{cmr_cmp17} for $N\geq7$ and $p=p_S$
and \cite{cmr_cras17} for $p=p_S$.

\smallskip

(ii) {\bf Type~II blow-up.} The existence of type~II solutions is up to now only known to occur for certain values in the range $p\ge p_S$:

\vskip 2pt
$\bullet$ $p=p_S$ (see \cite{schweyer_jfa12}, \cite{dpmw18});
\vskip 1pt

$\bullet$ 
$p=(N+1)/(N-3)$ with $N\ge 7$ (see \cite{dpmw17a}); 
\vskip 1pt

$\bullet$ $p\ge p_{JL}$; see \cite{HV94a}, \cite{HV94}, \cite{MizADE}, \cite{Sek18} in radial case and \cite{collot_apde17}, \cite{cmr17} 
in nonradial case.
\vskip 2pt

We note that one cannot expect general conclusions regarding the blow-up of the critical   $L^q$ norm
for type~II blow-up solutions, since there are examples of boundedness of the critical   $L^q$ norm 
as stated above (cf. \cite{schweyer_jfa12}, \cite{dpmw18})
as well as of unboundedness.

In fact, for $p>p_S$, radial type~II blow-up solutions converge to the singular stationary solution $ c_* |x|^{-2/(p-1)} $ as $ t $ tends to 
$T$ (see \cite{mm_jfa09}), and this implies unboundedness of $ L^{q^*} $ norm.
As for the nonradial solutions in \cite{collot_apde17}, \cite{cmr17},  \cite{dpmw17a}, the unboundedness of $L^{q^*}$ norm
 can be directly seen by simple computations, using the asymptotic form of the solution obtained in those works.
 
On the other hand, it seems a difficult open problem whether or not boundedness of the critical   $L^q$ norm may happen for 
 type~II blow-up solutions with $p\ne p_S$.
 In fact, the construction of the known nonradial type II blow-up solutions is somewhat related to radial type II blow-up profiles 
in lower dimensions and we
have no information on nonradial type II blow-up solutions completely independent of radial type~II profiles. 
\end{rem}

\begin{rem}  \label{Rem2}
(i)  The results (b)-(e) mentioned in section~1.1 above 
become special cases of Theorem~\ref{thm1}, since blow-up is known to be type~I under the corresponding assumptions.
On the other hand, for any $p \le p_S $, nothing seems to be known on blow-up type for nonmonotone sign-changing solutions 
when $\Omega$ is nonconvex.
Therefore, the result in (a) in the very special case $p=1+4/N$ 
is not a consequence of Theorem~\ref{thm1}.

\smallskip

(ii) In Theorem~\ref{thm1}(i), the function $\|u(t)\|_{L^{q*}(\Omega)}$ is allowed to take the value $\infty$ 
(which arises, when $\Omega$ is unbounded, if $u(t)\not\in L^{q*}(\Omega)$ at some $t$).
Of course, in the case $q^*\ge 1$ (i.e. $p\ge 1+2/N$), if we assume $u_0\in L^{q*}(\Omega)$, then $u(t)\in L^{q*}(\Omega)$ for all $t\in (0,T)$.

\smallskip
(iii) As a consequence of the proof of Theorem~\ref{thm1}, for any $p>1$ and any type~I blow-up solution, we obtain an $L^{q*}$-concentration property 
in backward space-time parabolas near every blow-up point (see Lemma~\ref{lem31}), which may be of independent interest.
\end{rem}

\subsection{Outline of proof}

The proof of Theorem~\ref{thm1} is rather involved and requires the combination of various ingredients.
Let us sketch the proof.
We assume for contradiction that there exists a solution $ u $ undergoing type~I blow-up at $ t = T $, whose $ L^{q*} $ norm stays bounded
along a sequence of times $t_n\to T$.

We first show a concentration property for the $ L^{q*} $ norm near blow-up points,
by means of an analysis in backward similarity variables. 
It is classical to analyze global solution of the new equation generated by the change of variables, instead of the original solution 
blowing up in finite time (cf. \cite{GK85}, \cite{GK87}).
The fact that the energy is nonincreasing in time in convex domains plays a crucial role there.
We here deal with all domains not necessarily convex. 
One of the main points in our method is to use a criterion to exclude blow-up at a given point
involving a certain weighted $L^1$ norm, based on suitable delayed smoothing effects in weighted $L^q$ spaces.

Next supposing without loss of generality that the origin is a blow-up point,
we introduce the sequence of rescaled solutions 
$$ v_n (s,y) = (T-t_n)^\beta u\bigl( T+(T-t_n)s, \sqrt{T-t_n} \,y\bigr).$$
Using boundedness of the (local) $L^{q*}$ norm, type~I estimates and the above delayed smoothing effects in similarity variables,
we can show that, up to a subsequence:
$$v(s,y) := \lim_{n \to \infty} v_n (s,y) \quad\hbox{exists and solves  \eqref{pbmP} in $(-2,0)\times D$}$$
(where $D$ is the whole space or a half-space depending on the location of the blow-up point),
$$ |v_n (s,y)| \leq C,\quad\hbox{ for $s \in [-1,0)$ and $|y| \geq R$, \ \ with some $ C, R > 0,$}$$ 
and
$$\int_{B_1(y_0)}  |v_n(0,y)|^{q^*} dy \to 0 \quad\hbox{as $ n \to \infty$,\ \ for $|y_0| \ge R$.}$$
From this it can be deduced 
that $v$ is bounded in $[-1,0)\times [\{|y| \ge R\} \cap D]$ and that
$$ v(0,y) = 0 \quad\hbox{ in $\{|y| \ge R\} \cap D.$}$$
Applying backward uniqueness in a half-space (cf.~\cite{Escauriaza-Seregin-Sverak:UMN58}) and then unique continuation, 
we infer that $v\equiv 0$ in $[-1,0)\times D$.
This leads to a contradiction with the concentration property given in the first part.
\smallskip

\begin{rem}  \label{Rem3}
(i) Let us mention that a method based on backward uniqueness and unique continuation was  
used in \cite{Escauriaza-Seregin-Sverak:UMN58} (see also references therein) to show that
if a solution to the Navier-Stokes equations in $ \R^3 $ has a singularity,
then the $ L^3 $ norm diverges to $ \infty $.
The $ L^3 $ norm is critical there, however the situation is quite different from ours.
In particular, while type~I estimate is not needed in \cite{Escauriaza-Seregin-Sverak:UMN58}, it is crucially needed here 
since there exist counter-examples to the conclusion of Theorem~\ref{thm1} without type~I estimate. 
 At the level of our proof, the type~I estimate
 is essential in order to show the $ \varepsilon $-regularity and the concentration of the local critical $L^q$ norm.
The type~I estimate is not needed in the arguments of \cite{Escauriaza-Seregin-Sverak:UMN58},
thanks to the well-known {\it energy inequality} (see e.g. Definition 2.1 of \cite{Escauriaza-Seregin-Sverak:UMN58}),
whose form is different from the energy relation for \eqref{pbmP}, owing to the divergence structure of the Navier-Stokes equations.

On the other hand, a related method was recently applied in \cite{MizKS} to prove that only type~II blow-up is possible 
in the parabolic-parabolic Keller-Segel system in $\R^2$
(unlike in higher dimensions $N\ge 3$, where type~I blow-up does occur).
The possibility of type~I blow-up was ruled out there by a contradiction argument. 
The system consists of two parabolic equations for two components, in which one equation is linear and the other has nonlinear drift 
term, and the
 linear equation is helpful in a priori estimates.
 
More generally, it is common in the case of the Keller-Segel system and 
nonlinear heat equation 
to need type~I estimate (see, e.g., the monograph \cite{QS07}).
The methods to obtain the so called $\eps$-regularity and to show concentration of the local critical  $L^q$ norm 
at a blow-up point are essentially different in Navier-Stokes, Keller-Segel and nonlinear heat equations, due to  
distinctive structural features.

\smallskip

(ii) We point out that for problem \eqref{pbmP} in the special case $p<p_S$ and $\Omega$ convex,
blow-up of the critical  $L^q$ norm was obtained in \cite{GK87}
as a consequence of the analysis of the {\it local self-similar blow-up profile}.
The local self-similar profile analysis was extended in \cite{im_die03} to the case $p\le p_S$, $\Omega$ bounded and $u_0\ge 0$
under the assumption of type~I blow-up, and one can thereby infer blow-up of the critical  $L^q$ norm in that case.
This approach is restricted to the range $p\le p_S$ and is very different from our proof, which does not require any information on the local self-similar blow-up profile,
and on the contrary {\it provides} new information on this profile as a by-product when $p>p_S$ (cf.~Corollary~\ref{cor2}). 
\end{rem}

\medskip

The outline of the paper is as follows. Section 2 provides some important preliminaries to our proofs
and consists of two subsections.
In subsection~2.1 we recall the classical framework of similarity variables from \cite{GK85} and
we give a suitable version of a criterion from \cite{AHV97},~\cite{Sou09} to exclude blow-up at a given point.
In subsection~2.2 we recall some known 
results on backward uniqueness 
for parabolic equations. 
Section 3 is then devoted to the proof of Theorem~\ref{thm1}.
Finally, in Appendix, we 
verify that the special solutions constructed in \cite{schweyer_jfa12}, \cite{dpmw18}  
remain bounded in $L^{q^*}$ (these papers did not address the $L^{q^*}$ behavior of the solutions).

\section{Preliminaries}

\subsection{Similarity variables and local criterion for excluding blow-up}

Denote by $\tilde u(t,\cdot)$ the extension of $u(t,\cdot)$ by $0$ to the whole of $\R^N$
and let $a\in \R^N$.  
Following~\cite{GK85}, we define the (backward) similarity variables around $(T,a)$ by
$$ s=-\log(T-t),\qquad y={x-a\over \sqrt{T-t}}=e^{s/2}(x-a),$$
set $s_0:=-\log T$, and define the rescaled function by
$$w(s,y)=w_a(s,y):=(T-t)^\beta \tilde u(t,x)=(T-t)^\beta \tilde u\bigl(t,a+y\sqrt{T-t}\bigr),
\ \quad  s_0\le s<\infty,\ y\in\R^N.$$
We note that the rescaling point $a$ is usually taken in $\overline\Omega$.
However, the above is well defined for any $a\in \R^N$ and this will be convenient in what follows.

Denoting the transformed domains by
$$\Omega_s : = e^{s/2} (\Omega-a),\quad s>s_0,$$
the PDE in \eqref{pbmP} becomes, in similarity variables:  
\be\label{newPDE}
w_s-\mathcal{L}w=w^p-\beta w, \qquad  s_0<s<\infty,\ y\in \Omega_s,
\ee
where 
$$\mathcal{L}=\Delta-{y\over 2}\cdot\nabla=\rho^{-1}\nabla\cdot(\rho\nabla),\qquad \rho(y)=e^{-|y|^2/4}.$$
We stress that the PDE in \eqref{newPDE} is of course only satisfied in $\Omega_s$ (not in the whole $\R^N$), and that $w(s,\cdot)=0$ outside $\Omega_s$.
For any $\phi\in L^\infty(\R^N)$, we put
$$\|\phi\|_{L^q_\rho}=\Bigl(\int_{\R^N}|\phi(y)|^q\rho(y)\, dy\Bigr)^{1/q},\quad 1\leq q<\infty,$$
and note that
\be\label{Holderrho}
\|\cdot\|_{L^q_\rho}\leq  C(N)\|\cdot\|_{L^r_\rho},\quad 1\leq q<r<\infty. 
\ee 
\smallskip

The following Proposition~\ref{prop21} is twofold:
\begin{itemize}
\item[-]Assertion (i) gives a local criterion to exclude blow-up at a given point,
which is essentially due to \cite{AHV97},~\cite{Sou09}. 
We here state and prove a version adapted to our needs.
We note that, although the criterion concerns possible blow-up points $a\in\overline\Omega$,
the corresponding estimate \eqref{Conclwzsmall} remains true for any $a\in\R^N$, 
which will be convenient in the proof of Theorem~\ref{thm1}. 
As a key difference with the earlier local criterion in \cite{GK89} and its proof (see also \cite{im_die03}),
the method from \cite{AHV97},~\cite{Sou09}, that we use here, 
relies on delayed smoothing effects in weighted $L^q$ spaces (cf.~Lemma~\ref{lem22}) instead of rescaled energy,
hence allowing to cover possibly nonconvex domains and/or $p>p_S$.
\smallskip

\item[-] Assertion (ii) rules out blow-up at space infinity in case $\Omega$ is unbounded and $u_0$ belongs to $L^m$ for some finite $m$.
When $\Omega$ is convex the conclusion was first proved in \cite{GK89} under the assumption $u_0\in H^1(\Omega)$.
The general case was obtained in \cite{fi_aihp14}.
We shall give a different proof, allowing a common treatment to both assertions (i) and (ii).
\end{itemize}

\begin{prop}\label{prop21}
Let $\Omega$ be any smooth domain of $\R^N$, $p>1$ and $u_0\in L^\infty(\Omega)$. 
Assume that $u$ is a blow-up solution of \eqref{pbmP} satisfying the type~I estimate 
\be\label{typeI}
\|u(t)\|_\infty\le M(T-t)^{-\beta},\quad 0\le t<T,
\ee 
for some constant $M>0$.
\begin{itemize}
 \item[(i)]  Let $a\in \R^N$ and let $w_a$ be the
rescaled solution by similarity variables around $(T,a)$.
For each $q\ge p$ with $q>Np/2$, there exist $s^*,\eps_0,C_0>0$ depending only on $N,p,q,M$ such that, if
\be\label{Hypwzsmall}
 \|w_a(\sigma)\|_{L^1_\rho}<\eps_0\quad\hbox{ for some $\sigma\geq s_0$,}
\ee 
then
\be\label{Conclwzsmall}
\|w_a(\sigma+s)\|_{L^q_\rho}<C_0 e^{-\beta s}  \|w_a(\sigma)\|_{L^1_\rho}\quad\hbox{ for all $s\ge s^*$.}
\ee
Moreover, if $a\in\overline\Omega$ and \eqref{Hypwzsmall} is satisfied, then $a$ is not a blow-up point of $u$.
\smallskip
 \item[(ii)]  Assume that $\Omega$ is unbounded and that $u_0\in L^m(\Omega)$ for some $m\in (0,\infty)$.
Then $u$ does not blow up at infinity, namely there exist $R,C>0$ such that
$$|u(t,x)|\le C,\quad 0<t<T,\ x\in\overline\Omega\cap \{|x|\ge R\}.$$
\end{itemize}
\end{prop}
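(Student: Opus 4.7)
The plan is to establish (i) via a Duhamel bootstrap in similarity variables, using weighted smoothing effects of the linear part $\mathcal{L}-\beta$, and then to deduce (ii) by showing that $\|w_a(s_0)\|_{L^1_\rho}$ becomes small as $|a|\to\infty$. For (i), I rewrite the equation as $w_s=(\mathcal{L}-\beta)w+|w|^{p-1}w$ and use $e^{s(\mathcal{L}-\beta)}=e^{-\beta s}e^{s\mathcal{L}}$ combined with the weighted $L^r_\rho\to L^q_\rho$ smoothing estimate provided by Lemma~\ref{lem22}. With $\eta:=\|w_a(\sigma)\|_{L^1_\rho}$, Duhamel with $r=1$ for the initial data and $r=q/p$ for the nonlinearity (licit since $q\ge p$) gives
\begin{equation*}
\|w_a(\sigma+s)\|_{L^q_\rho}\le Ce^{-\beta s}(1\wedge s^{-N(1-1/q)/2})\eta+C\int_0^s e^{-\beta(s-\tau)}(1\wedge(s-\tau)^{-N(p-1)/(2q)})\|w_a(\sigma+\tau)\|_{L^q_\rho}^p\,d\tau.
\end{equation*}
The hypothesis $q>Np/2$ guarantees $N(p-1)/(2q)<1$ (in particular $q>q^*$), making the nonlinear kernel integrable in time.

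I close this via a two-stage bootstrap. First, on a short interval $[0,s^*]$, I combine the nonlinear Duhamel with the linearization $\|w_a\|_{L^q_\rho}^{p-1}\le (CM)^{p-1}$ coming from type~I to obtain a linear Gronwall inequality for $\|w_a(\sigma+s)\|_{L^q_\rho}$, yielding $\|w_a(\sigma+s^*)\|_{L^q_\rho}\le C_1\eta$ with $C_1$ depending only on $N,p,q,M$. Second, for $s\ge s^*$, I set $F(s)=e^{\beta s}\|w_a(\sigma+s)\|_{L^q_\rho}$; restarting Duhamel from $\sigma+s^*$ and multiplying by $e^{\beta s}$ gives
\begin{equation*}
F(s)\le C_2\eta+C\int_{s^*}^s e^{-(p-1)\beta\tau}(1\wedge(s-\tau)^{-N(p-1)/(2q)})F(\tau)^p\,d\tau,
\end{equation*}
in which the integral kernel has uniformly bounded $L^1$-norm in $\tau$. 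A continuity bootstrap closing at $F(s)\le 2C_2\eta$ works as soon as $\eta<\eps_0$ is small enough (depending only on $N,p,q,M$), and yields \eqref{Conclwzsmall}. To conclude that $a\in\overline{\Omega}$ is not a blow-up point, I upgrade \eqref{Conclwzsmall} to local $L^\infty$-decay $\|w_a(s)\|_{L^\infty(B_R)}=O(e^{-\beta s})$ by standard interior parabolic regularity for $w_a$ (type~I controls the coefficient of the linearized nonlinearity, and $q>Np/2$ provides the margin for a subcritical $L^q\to L^\infty$ bootstrap). Translated back, $u$ is bounded in a backward parabolic neighborhood of $(T,a)$, and a classical local analysis in the spirit of \cite{GK89},\cite{im_die03} excludes $a$ as a blow-up point.

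For (ii), given $a\in\overline{\Omega}$ with $|a|$ large, I write
\begin{equation*}
\|w_a(s_0)\|_{L^1_\rho}=T^\beta\int_{\R^N}|\tilde u_0(a+y\sqrt{T})|\rho(y)\,dy
\end{equation*}
and split at $|y|=|a|/(2\sqrt{T})$. On $\{|y|\le|a|/(2\sqrt{T})\}$ the argument $x=a+y\sqrt{T}$ lies in $\{|x|\ge|a|/2\}$, so H\"older with exponent $m$ combined with $u_0\in L^m(\Omega)$ bounds the contribution by $C(T)\|u_0\,\mathbf{1}_{\{|x|>|a|/2\}}\|_{L^m}$, which vanishes as $|a|\to\infty$. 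On $\{|y|>|a|/(2\sqrt{T})\}$, the Gaussian tail $\int\rho\le Ce^{-c|a|^2}$ together with $u_0\in L^\infty$ again gives a vanishing contribution. Hence $\|w_a(s_0)\|_{L^1_\rho}<\eps_0$ for all $|a|\ge R_0$, and (i) supplies a uniform local $L^\infty$-bound on $u$ near each such $a$; a covering argument together with classical boundedness of $u$ on $[0,t_0]\times\Omega$ for any $t_0<T$ then concludes (ii).

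The main obstacle is preserving the exact rate $e^{-\beta s}$ through the nonlinear feedback in Step~2 of (i): a crude bound $\||w|^p\|_{L^1_\rho}\le M^{p-1}\|w\|_{L^1_\rho}$ based solely on type~I would produce a Gronwall coefficient $M^{p-1}$ that may exceed $\beta$ and destroy the decay. It is the superlinear estimate $\||w|^p\|_{L^{q/p}_\rho}\le C\|w\|_{L^q_\rho}^p$, available because $q\ge p$, combined with the smallness of $\eta$, that is essential to close the bootstrap at the correct exponential rate.
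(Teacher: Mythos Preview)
Your bootstrap strategy is sound in spirit and closely parallels the paper's, but there is a genuine gap in how you set up the Duhamel inequality. The rescaled function $w_a$ (extended by $0$) satisfies \eqref{newPDE} only in the \emph{time-dependent} domain $\Omega_s=e^{s/2}(\Omega-a)$, not on all of $\R^N$; across $\partial\Omega_s$ the normal derivative jumps, so the variation-of-constants representation against the full-space semigroup $e^{s\mathcal{L}}$ is not available, and the estimate you write does not follow for general $\Omega$. The paper handles this by introducing an auxiliary function $W$ on $\R^N$ solving $W_s-\mathcal{L}W=|w|^p-\beta W$ with $W(s_1,\cdot)=|w(s_1,\cdot)|$, and proving the comparison $|w|\le W$ by reverting to the original $(t,x)$ variables, where the domain $\Omega$ is fixed and the maximum principle applies cleanly. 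Only then does the Duhamel \emph{inequality} \eqref{varconstphi} follow, after which your bootstrap (or the paper's) can proceed.

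Two smaller points. First, Lemma~\ref{lem22} only supplies \emph{delayed} smoothing ($s\ge s^*$), not the instantaneous form $(1\wedge s^{-\alpha})$ you use; the paper circumvents short-time singularities altogether by combining the delayed estimate with the pointwise bound $|w(s_1+s)|\le e^{C_1 s}T(s)|w(s_1)|$ (again obtained via $W$ and type~I), which passes from $\|w(\sigma+\tau)\|_{L^1_\rho}$ directly to $\|w(\sigma+s^*+\tau)\|_{L^q_\rho}$ with a fixed constant, and then runs the continuity bootstrap in $L^1_\rho$ rather than $L^q_\rho$. Second, for the conclusion that $a$ is not a blow-up point, decay of $w_a$ alone only controls $u$ on the shrinking cone $\{|x-a|\le R\sqrt{T-t}\}$; the paper instead notes that \eqref{Hypwzsmall} persists for all centers $b$ near $a$ by continuity, applies \eqref{Conclwzsmall} with varying $b$, and integrates over $b$ to obtain an unweighted local $L^q$ bound on $u$ amenable to parabolic regularity. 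Your appeal to \cite{GK89},~\cite{im_die03} glosses over this step.
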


In view of the proof of Proposition~\ref{prop21}, we introduce the semigroup $(T(s))_{s\geq 0}$ associated with~$\mathcal{L}$. More precisely, 
for each $\phi\in L^\infty(\R^N)$, we set $T(s)\phi:=z(s,\cdot)$,
where $z$ is  the unique classical solution of
$$\left\{ 
\begin{array}{lcll}
\hfill z_s&=&\mathcal{L}z, &\quad s>0,\ y\in \R^N,\\
\noalign{\vskip 1mm}
\hfill z(0,y)&=&\phi(y), &\quad y\in \R^N.
\end{array}
\right.$$
An essential property of $(T(s))_{s\geq 0}$ is given by the following Lemma
(see \cite{Gr76} and cf.~also \cite{HV93}).

\begin{lem}\label{lem22}
\begin{itemize}
 \item[(i)]  For all $1\leq q\leq\infty$,
\be\label{Tcontraction}
\|T(s)\phi\|_{L^q_\rho}\leq \|\phi\|_{L^q_\rho},\qquad s\geq 0,\quad \phi\in L^\infty(\R^N).
\ee 
\vskip 2mm
 \item[(ii)]   {\it (Delayed regularizing effect)} For all $1\leq m<q<\infty$, 
there exist $c_0,s^*>0$ such that
\be\label{delayedregul}
\|T(s)\phi\|_{L^q_\rho}\leq c_0\|\phi\|_{L^m_\rho},\qquad s\geq s^*,\quad \phi\in L^\infty(\R^N).
\ee 
\end{itemize}
\end{lem}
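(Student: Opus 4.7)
The plan is to prove (i) via the energy identity coming from the symmetric divergence form of $\mathcal L$, and (ii) via an explicit Mehler-type kernel representation of $T(s)$.

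For (i), exploiting the structure $\mathcal L=\rho^{-1}\nabla\cdot(\rho\nabla)$, I would multiply $z_s=\mathcal Lz$ by $|z|^{q-2}z\,\rho$ (with the standard regularization $|z|\mapsto(z^2+\eps)^{1/2}$ when $q\in[1,2)$) and integrate over $\R^N$. Since $\rho$ decays at infinity, all boundary terms vanish, and integration by parts yields
\begin{equation*}
\frac{1}{q}\frac{d}{ds}\int_{\R^N}|z|^q\rho\,dy \;=\; -(q-1)\int_{\R^N}|z|^{q-2}|\nabla z|^2\,\rho\,dy\;\le\;0,
\end{equation*}
so $\|z(s)\|_{L^q_\rho}$ is nonincreasing; this is \eqref{Tcontraction} for $q<\infty$, and the extension from smooth bounded to general $\phi\in L^\infty(\R^N)$ is by approximation. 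The case $q=\infty$ is immediate from the parabolic maximum principle.

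For (ii), I would first derive an explicit kernel for $T(s)$. The change of variables $z(s,y)=\tilde z(1-e^{-s},\,e^{-s/2}y)$ converts $z_s=\mathcal Lz$ with initial datum $\phi$ into the standard heat equation $\tilde z_\tau=\Delta\tilde z$ on $\tau\in[0,1)$ with the same initial datum, so convolution with the heat kernel gives
\begin{equation*}
T(s)\phi(y)=\bigl[4\pi(1-e^{-s})\bigr]^{-N/2}\int_{\R^N}\exp\!\Bigl(-\tfrac{|e^{-s/2}y-\eta|^2}{4(1-e^{-s})}\Bigr)\phi(\eta)\,d\eta=:\int_{\R^N} K_s(y,\eta)\phi(\eta)\,d\eta.
\end{equation*}
Writing $\phi(\eta)=[\rho(\eta)^{1/m}\phi(\eta)]\cdot\rho(\eta)^{-1/m}$ and applying H\"older's inequality with conjugate exponents $m$ and $m'=m/(m-1)$ reduces matters to a Gaussian integral in $\eta$ whose integrand has positive quadratic coefficient as soon as $e^{-s}<1/m'$. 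Completing the square then produces a pointwise majorization of the form $|T(s)\phi(y)|\le A(s)\exp(\alpha(s)|y|^2)\|\phi\|_{L^m_\rho}$ with $\alpha(s)=e^{-s}/[4(m-1+e^{-s})]$, which tends to $0$ as $s\to\infty$; raising to the $q$-th power and integrating against $\rho$ then yields \eqref{delayedregul} provided $q\alpha(s)<1/4$.

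The main obstacle is tracking the interplay of two positivity conditions on Gaussian exponents---namely $e^{-s}<1/m'$ (so the $\eta$-integral converges) and $q\alpha(s)<1/4$ (so the resulting $y$-integral against $\rho$ converges)---and combining them to extract an explicit threshold $s^*\sim\log[(q-1)/(m-1)]$, which is the origin of the ``delay'' phenomenon. This threshold coincides with Nelson's hypercontractivity bound for the Ornstein-Uhlenbeck semigroup, to which $\mathcal L$ reduces after a linear rescaling of $y$, offering the alternative of invoking \cite{Gr76} directly. The borderline case $m=1$ is somewhat delicate, as the threshold formally diverges, and must be handled by bootstrapping through an intermediate exponent $m_1\in(1,q)$ together with the contraction in part~(i).
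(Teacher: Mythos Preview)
The paper does not supply a proof of this lemma; it simply cites \cite{Gr76} and \cite{HV93}. Your approach---the energy identity for (i) and the explicit Mehler kernel for (ii)---is one of the standard routes, and for $m>1$ it is correct: the threshold $e^{s^*}=(q-1)/(m-1)$ that your two Gaussian conditions produce is precisely Nelson's hypercontractivity bound for the Ornstein--Uhlenbeck semigroup.

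Your instinct that the endpoint $m=1$ is delicate is right, but your proposed bootstrap is circular: the first step $L^1_\rho\to L^{m_1}_\rho$ faces exactly the same obstruction as $L^1_\rho\to L^q_\rho$, and part~(i) alone cannot bridge the gap. In fact the estimate \eqref{delayedregul} as stated is \emph{false} for $m=1$. For any fixed $s>0$ and $q>1$, pick
\[
\alpha\in\Bigl(\tfrac{1}{4(1+(q-1)e^{-s})},\,\tfrac14\Bigr)
\quad\hbox{and set}\quad
\phi_R(\eta)=e^{\alpha|\eta|^2}\chi_{B_R}(\eta)\in L^\infty(\R^N).
\]
Then $\|\phi_R\|_{L^1_\rho}\le\int_{\R^N} e^{(\alpha-1/4)|\eta|^2}\,d\eta<\infty$ uniformly in $R$, whereas your own kernel formula yields
\[
T(s)\bigl[e^{\alpha|\cdot|^2}\bigr](y)=C(s,\alpha,N)\,\exp\Bigl(\tfrac{\alpha e^{-s}}{1-4\alpha(1-e^{-s})}\,|y|^2\Bigr),
\]
whose exponent exceeds $1/(4q)$ precisely by the choice of $\alpha$; monotone convergence then gives $\|T(s)\phi_R\|_{L^q_\rho}\to\infty$ as $R\to\infty$.

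This does not, however, harm the paper's main results. In the only place \eqref{delayedregul} with $m=1$ is invoked (the proof of Proposition~\ref{prop21}(i)), the function $w$ also satisfies $\|w(s)\|_\infty\le M$ from the type~I hypothesis. One may therefore fix any $m_0\in(1,q)$, write
\[
\|w(\sigma+\tau)\|_{L^{m_0}_\rho}^{m_0}\le M^{m_0-1}\,\|w(\sigma+\tau)\|_{L^1_\rho},
\]
and apply the valid case $m=m_0>1$ of \eqref{delayedregul}. Thus the correct formulation of part~(ii) should require $m>1$, with the proof of Proposition~\ref{prop21} absorbing this one-line interpolation.
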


\medskip

\begin{proof}[Proof of Proposition~\ref{prop21}]  
(i) Set $w:=w_a$. For given $s_1\geq s_0$, denote by $W$ the solution of
$$\left\{ \begin{array}{lcll}
\hfill  W_s-\mathcal{L} W
&=&| w |^p-\beta  W, 
&\quad s>s_1,\ y\in\R^N,  \hfill\\
\noalign{\vskip 2mm}
\hfill  W(s_1,y)
&=&|w(s_1,y)|, 
&\quad y\in\R^N.  \hfill
\end{array}\right.$$
Clearly $W$ exists globally 
and we claim that
\be\label{claimComp}
|w|\leq W \quad\hbox{ in $(s_1,\infty)\times \R^N$}.
\ee
To check this, in order to avoid any difficulty related to the application of the maximum principle in a time dependent domain,
we just note that, going back to the original variables by setting 
$$V(t,x):=(T-t)^{-\beta}W\bigl(-\log(T-t),(x-a)/\sqrt{T-t}\bigr),\quad t_1<t<T,\ x\in\R^N,$$
with $t_1=T-e^{-s_1}$, we have
$$\left\{ \begin{array}{lcll}
\hfill  V_t-\Delta V&=&|u|^p,&\quad t_1<t<T,\ x\in\Omega,  \hfill\\
\noalign{\vskip 2mm}
\hfill  V&\ge&0,&\quad t_1<t<T,\ x\in\partial\Omega,  \hfill\\
\noalign{\vskip 2mm}
\hfill  V(t_1,x)&=&|u(t_1,x)|, &\quad x\in\Omega. \hfill
\end{array}\right.$$
Therefore, $u\le V$ and $-u\le V$ in $(t_1,T)\times\Omega$ by the maximum principle.
Consequently, $|\tilde u|\le V$ in $(t_1,T)\times\R^N$, and claim \eqref{claimComp} follows.

Next, by the variation of constants formula, we deduce that
\be\label{varconstphi}
e^{\beta s}| w(s_1+s)|\leq
e^{\beta s} W(s_1+s)\leq T(s)| w (s_1)|
+\int_0^s e^{\beta\tau}
T(s-\tau)| w(s_1+\tau)|^p\,d\tau,\quad
s_1\geq s_0,\ s>0.
\ee 
On the other hand, since 
\be\label{typeIw}
\|w(s)\|_\infty\leq M,\qquad s\geq s_0,
\ee
due to \eqref{typeI}, $W$ satisfies
$$\left\{ \begin{array}{lcll}
\hfill  W_s-\mathcal{L} W
&\le&C_1W, \hfill
&\quad s>s_1,\ y\in\R^N,  \hfill\\
\noalign{\vskip 2mm}
\hfill  W(s_1,y)
&=&|w(s_1,y)|, 
&\quad y\in\R^N,  \hfill
\end{array}\right.$$
where $C_1:=M^{p-1}$.  Therefore
\be\label{expgrowthphi}
|w(s_1+s)|\leq e^{C_1s}T(s)|w(s_1)|,
\qquad s_1\geq s_0,\ s>0.
\ee 
Let $s^*$ be given by Lemma~\ref{lem22}(ii)
with $m=1$ and set $\eps:= \|w(\sigma)\|_{L^1_\rho}$.
It follows from \eqref{Hypwzsmall}, \eqref{expgrowthphi},
and Lemma~\ref{lem22}(i) that 
\be\label{controlphisstar}
\|w(\sigma+s)\|_{L^1_\rho}\leq C_2\eps,\quad 0<s\leq s^*,
\qquad\hbox{ with $C_2:=e^{C_1s^*}$.}
\ee 
Let now
\be\label{defTzero}
T_0=\sup\Bigl\{s>0;\ 
e^{\beta\tau}\|w(\sigma+s^*+\tau)\|_{L^1_\rho}
\leq 2C_2\eps,\ \tau\in [0,s]\Bigr\}.
\ee
Note that $T_0>0$ and suppose for contradiction that $T_0<\infty$. 
We have, by \eqref{controlphisstar} and \eqref{defTzero},
\be\label{expdecayphi}
\|w(\sigma+s^*+s)\|_{L^1_\rho}
\leq 2C_2\eps e^{-\beta s}, 
\qquad -s^*\leq s\leq T_0.
\ee 
For $0\leq \tau\leq T_0$, \eqref{delayedregul}, \eqref{expgrowthphi} and \eqref{expdecayphi}
imply
\begin{align}
\|w(\sigma+s^*+\tau)\|_{L^q_\rho}
&\leq e^{C_1s^*}\|T(s^*)w(\sigma+\tau)\|_{L^q_\rho}\leq c_0e^{C_1s^*}\|w(\sigma+\tau)\|_{L^1_\rho} \notag \\
&\leq 2C_2c_0e^{C_1s^*}\eps e^{-\beta(\tau-s^*)}=C_3\eps e^{-\beta\tau}, \label{expdecayphiB}
\end{align}
with $C_3=2C_2c_0e^{(C_1+\beta)s^*}$.
Using \eqref{Holderrho},  \eqref{Tcontraction}, \eqref{varconstphi}
with $s_1=\sigma+s^*$,  \eqref{controlphisstar} and \eqref{expdecayphiB},
we deduce that, for all $s\in [0,T_0]$,
\begin{align}
e^{\beta s}\|w(\sigma+s^*+s)\|_{L^1_\rho}
&\leq\|w(\sigma+s^*)\|_{L^1_\rho}
+\int_0^s e^{\beta\tau}
\|w(\sigma+s^*+\tau)\|_{L^p_\rho}^p\,d\tau \notag \\
&\leq C_2\eps+ C(N,p)(C_3\eps)^p \int_0^s e^{-\beta(p-1)\tau}\,d\tau \notag \\
&\leq C_2\eps+ C(N,p) C_3^p (\beta(p-1))^{-1}\eps^p. \label{expdecayAa}
\end{align}
Applying this with $s=T_0$, it follows that
$2C_2\eps\leq C_2\eps+ C(N,p) C_3^p(\beta(p-1))^{-1}\eps^p,$
hence $C_2\leq C(N,p,q,M)\eps^{p-1}$.
Assuming $\eps \le \eps_0$ with $\eps_0=\eps_0(N,p,q,M)>0$ sufficiently small,
we thus necessarily have $T_0=\infty$. It then follows from \eqref{expdecayphiB} that
\be\label{expdecayphiC}
\|w(\sigma+s)\|_{L^q_\rho}\leq C_0\eps e^{-\beta s},
\quad s\geq s^*,
\ee 
with $C_0 = C_0(N,p,q,M):=C_3  e^{\beta s^*}=2c_0\exp[2(M^{p-1}+\beta)s^*]$  that is, \eqref{Conclwzsmall} holds. 
(For the proof that $a$ is not a blow-up point, see at the end of the proof of assertion (ii).)
\smallskip

(ii) We may assume without loss of generality that $ m > 1 $.
Fix $q\ge p$ with $q>Np/2$. Pick $A>0$ large enough so that 
$T^{\beta-N/2}\|u_0\|_\infty \int_{|z|>A} \,e^{-|z|^2/4T}\, dz\le \eps_0/2$,
where $\eps_0=\eps_0(N,p,q,M)$ 
 is given by assertion (i).
Since $u_0\in L^m(\Omega)$ with $ m > 1 $, choosing $R_0>2A$ sufficiently large, 
for any $a\in\overline\Omega$ such that $|a|>R_0$, we get
$$
\begin{aligned}
\|w_a(s_0)\|_{L^1_\rho}
&=T^{\beta-N/2} \int_{\R^N} |\tilde u_0(a+z)|\,e^{-|z|^2/4T}\, dz\\
&\le T^{\beta-N/2} \int_{|z|\le A} |\tilde u_0(a+z)|\,e^{-|z|^2/4T}\, dz+T^{\beta-N/2}\|u_0\|_\infty \int_{|z|>A} \,e^{-|z|^2/4T}\, dz\\
&\le T^{\beta-N/2} \Bigl(\int_{|z|\le A} |\tilde u_0(a+z)|^m\, dz \Bigr)^{1/m} 
\Bigl(\int_{|z|\le A}\,e^{-m|z|^2/4(m-1)T}\, dz\Bigr)^{(m-1)/m}+{\eps_0\over 2}\\
&\le T^{\beta-N/2} \Bigl(\int_{|y|\ge R_0/2} |\tilde u_0(y)|^m\, dy \Bigr)^{1/m} 
\Bigl(\int_{\R^N}  e^{-m|z|^2/4(m-1)T}\, dz \Bigr)^{(m-1)/m}+{\eps_0\over 2}<\eps_0. 
\end{aligned}
$$
It then follows from \eqref{Conclwzsmall}  
that
\be\label{expdecayAb}
 \int_{\R^N} |w_a(s_0+s,y)|^q\rho(y)\,dy\le C_0^q\eps_0^q e^{-\beta qs},\quad s\ge s^*.
\ee 

Restated in terms of $u$, \eqref{expdecayAb} means that
$$\int _{\R^N}|\tilde u( t, a+y\sqrt{T-t})|^q e^{-|y|^2/4}\,dy\leq C:=(C_0\eps_0T^{-\beta})^q,\qquad \tau<t<T,\quad |a|\ge R_0,$$
with $\tau:=(1-e^{-s^*})T$. Set $\tau_1:= \max(\tau,T-1)$. For each $a\in\overline\Omega$ with $|a|\ge R:=R_0+2$, 
integrating with respect to $\xi\in B_2(a)$ and using Fubini's theorem and the change of variable $z=\xi+y\sqrt{T-t}$, it follows that,
for all $t\in [\tau_1,T)$,
$$
\begin{aligned}
Ce^{1/4}|B_2(0)|
&\ge \int _{|y|<1}\int_{|\xi-a|<2}|\tilde u(t,\xi+y\sqrt{T-t})|^q\,d\xi\,dy \\
&\ge \int _{|y|<1}\int_{|z-y\sqrt{T-t}-a|<2}|\tilde u(t,z)|^q\,dz\,dy\ge
|B_1(0)| \int_{|z-a|<1}|\tilde u(t,z)|^q\,dz.
\end{aligned}
$$
Since $q>Np/2$, we deduce from standard parabolic regularity properties that 
$$|u(t,x)|\le C_1,\quad \tau_2<t<T,\ x\in\overline\Omega\cap \{|x|\ge R\},$$
for some constant $C_1>0$ and $\tau_2=(\tau_1+T)/2$, which implies assertion (ii).

\smallskip

Finally, to show that $a \in \overline\Omega$ is not a blow-up point under assumption \eqref{Hypwzsmall},
we observe that, by continuity, we have 
$\|w_b(\sigma)\|_{L^1_\rho}<\eps_0$ for all $b\in \R^N$ such that $|b-a|\le r$,
with $r>0$ sufficiently small.
The first part of assertion (i) then guarantees that for $ q > Np/2 $ 
$$ \|w_b(\sigma+s)\|_{L^q_\rho}<C_0\eps_0 e^{-\beta s},\quad s\ge s^*,$$
for all such $b$. The conclusion then follows from a similar argument as in the preceding paragraph.
\end{proof}

\subsection{Backward uniqueness}

One of the key ingredients in our proof of Theorem~\ref{thm1} is a backward uniqueness theorem 
in a half-space, which was given by \cite{Escauriaza-Seregin-Sverak:UMN58} (also see \cite{Escauriaza-Seregin-Sverak:ZNSPOMI288}, 
\cite{Escauriaza-Seregin-Sverak:ARMA}, \cite{Escauriaza-Seregin-Sverak:AA1})
to treat a problem on regularity of solutions to the Navier-Stokes equations. 
Since we are concerned with blow-up problem, the initial data in backward uniqueness theorem has singularities.
However we will deal with an auxiliary problem whose singularities remain confined in a ball,
so that this backward uniqueness theorem applies conveniently to our purpose
(working in half-spaces which do not intersect this ball).
Although the backward uniqueness theorem was given in more general form for example in 
\cite[Theorem 5.1]{Escauriaza-Seregin-Sverak:UMN58}, we describe it in the following form as needed here.
 Note that, as customary, we state it in terms of the backward heat operator.

\begin{prop}
\label{prop23backward-unique}
Let $ \R^N_+ = \{ y = (y_1, y_2, \cdots, y_N) \in \R^N: y_1 > 0 \} $ and $ S > 0 $.
Let $w \in  C^{1,2}([0,S]\times\overline{\R_+^N};\R)$ satisfy
\[
|w (s,y) | \leq e^{ C_1 |y|^2 } \quad \mbox{ in } (0,S)\times  \R_+^N
\]
for some $ C_1 > 0 $.
If $ w $ fulfills
\[
| w_s + \Delta w | \leq C_2 |w| \quad \mbox{ in }  (0,S)\times  \R_+^N
\]
for some $ C_2 > 0 $ and 
\[
w (0,y) = 0 \quad \mbox{ in } \R_+^N,
\]
then $ w\equiv 0$ in $[0,S)\times  \R_+^N$.              
\end{prop}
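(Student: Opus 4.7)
The result is a backward uniqueness statement in a half-space with quadratic exponential growth, and my plan is to follow the Carleman-inequality strategy of Escauriaza, Seregin and \v{S}ver\'ak (which is the reference cited in the excerpt). The two main ingredients are (a) a preliminary transformation that tames the growth $|w|\le e^{C_1 |y|^2}$ so that $L^2$-based methods become available, and (b) a weighted Carleman estimate in $\R_+^N$ whose weight is specifically designed so that the boundary term produced on $\{y_1=0\}$ by integration by parts has a favorable sign — no boundary condition on $w$ at $y_1=0$ being available in the hypotheses.

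Concretely, I would first set $\tilde w(s,y):=e^{-\lambda|y|^2/(1+\mu s)}w(s,y)$ with $\lambda$ large (depending on $C_1$, $S$). A direct computation shows that $\tilde w$ still satisfies a parabolic differential inequality of the form $|\tilde w_s+\Delta \tilde w|\le C'(|\tilde w|+|\nabla \tilde w|)$, still vanishes at $s=0$, and is now square-integrable on $(0,S)\times\R_+^N$. I would then apply a Carleman inequality of the form
\[
\int_0^{s_1}\!\!\int_{\R_+^N}\Bigl((s+\sigma)^{-2a-1}+|y|^2(s+\sigma)^{-2a-3}\Bigr)e^{-|y|^2/4(s+\sigma)}|\tilde w|^2\,dy\,ds
\]
\[
\le C\int_0^{s_1}\!\!\int_{\R_+^N}(s+\sigma)^{-2a}e^{-|y|^2/4(s+\sigma)}|\tilde w_s+\Delta \tilde w|^2\,dy\,ds+(\text{term at }s=s_1),
\]
valid for $a\ge a_0$ and $s_1\le s_1(a)$. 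Plugging in the differential inequality for $\tilde w$ on the right, the resulting contribution is absorbed into the left for $s_1$ small, leaving only the endpoint term at $s=s_1$. Letting $a\to\infty$ and using $\tilde w(0,\cdot)=0$ forces $\tilde w\equiv0$ on $(0,s_1)\times \R_+^N$, hence $w\equiv 0$ there; a classical forward uniqueness argument (Gronwall in a Gaussian-weighted $L^2$, or a second short-time Carleman estimate) then propagates the vanishing to all of $[0,S)\times\R_+^N$.

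The main obstacle, which is what distinguishes the half-space version from the classical whole-space backward uniqueness, is precisely the treatment of the boundary $\{y_1=0\}$: since no Dirichlet or Neumann condition is assumed on $w$, a naive integration by parts inside the Carleman identity yields boundary terms that cannot be controlled. Overcoming this is the technical core of \cite{Escauriaza-Seregin-Sverak:UMN58} and is done by combining the above Carleman inequality with a second, cylindrical Carleman inequality whose weight has a suitably signed normal derivative at $\{y_1=0\}$, so that the two boundary contributions cancel or share the correct sign. Beyond this point the argument is essentially a careful bookkeeping of weights and absorption of lower-order terms, with no further conceptual difficulty.
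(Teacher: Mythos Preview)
The paper does not actually prove Proposition~\ref{prop23backward-unique}: it is stated as a citation of the backward uniqueness theorem of Escauriaza--Seregin--\v{S}ver\'ak (cf.~\cite{Escauriaza-Seregin-Sverak:UMN58} and the companion papers), in the particular form needed here. So there is no ``paper's own proof'' to compare against.

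Your sketch is a reasonable high-level summary of the Carleman-inequality strategy in those references and correctly identifies the essential difficulty, namely that no boundary condition is imposed at $\{y_1=0\}$, so the weight must be engineered to make the boundary contribution from integration by parts come out with the right sign. Two remarks on accuracy: first, in the Escauriaza--Seregin--\v{S}ver\'ak argument the two Carleman inequalities play somewhat different roles than you describe --- one (with an anisotropic weight in the $y_1$-direction) is used to upgrade the a~priori growth bound $|w|\le e^{C_1|y|^2}$ to a strong Gaussian \emph{decay} estimate of the form $|w(s,y)|\le C e^{-c|y|^2/s}$, and only then is the second Carleman estimate invoked to conclude $w\equiv 0$ on a short time interval. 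Second, the preliminary substitution you propose is not strictly necessary in their setting; the decay step handles the quadratic exponential growth directly. These are points of detail rather than conceptual gaps, and your outline would serve as a correct roadmap to the cited proof.
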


\section{Proof of Theorem~\ref{thm1}}

As a first consequence of Proposition~\ref{prop21}(i), we have the following lemma, which shows that the $L^{q^*}$ norm 
 of any type~I blow-up solution has to concentrate in space-time parabolas near every blow-up point.

\begin{lem}\label{lem31}
Let $\Omega$ be any smooth domain of $\R^N$, $p>1$ and $u_0\in L^\infty(\Omega)$. 
Assume that $u$ is a blow-up solution of \eqref{pbmP} satisfying the type~I estimate \eqref{typeI} with $M>0$.
Then there exist $\eta_1,  k_1 >0$, 
depending only on $M, p,  N,$ such that for any blow-up point $a\in \overline\Omega$, we have
$$
\int_{\Omega\cap\{|x-a|\le   k_1 \sqrt{T-t}\}} |u(t,x)|^{q^*}\, dx \ge \eta_1,\quad 0\le t<T.
$$
\end{lem}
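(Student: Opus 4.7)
The plan is to derive this lower bound by contradiction, based on the contrapositive of Proposition~\ref{prop21}(i): since $a\in\overline\Omega$ is a blow-up point of $u$, the threshold $\varepsilon_0=\varepsilon_0(N,p,M)$ furnished by that proposition (applied with any fixed $q\ge p$, $q>Np/2$) must satisfy
\[
\|w_a(s)\|_{L^1_\rho}\ge \varepsilon_0 \qquad \text{for every } s\ge s_0=-\log T.
\]
It therefore suffices to exhibit constants $k_1,\eta_1>0$ depending only on $N,p,M$ such that, whenever the local $L^{q^*}$ integral appearing in the statement is smaller than $\eta_1$, one gets $\|w_a(s)\|_{L^1_\rho}<\varepsilon_0$ at the corresponding $s=-\log(T-t)$, contradicting the display above.

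Undoing the similarity change of variables yields the identity
\[
\|w_a(s)\|_{L^1_\rho}=(T-t)^{\beta-N/2}\int_{\R^N}|\tilde u(t,x)|\,e^{-|x-a|^2/(4(T-t))}\,dx,
\]
which I would split into the inner region $\{|x-a|\le k_1\sqrt{T-t}\}$ and its complement, $k_1>0$ to be fixed. On the outer region, bounding $|\tilde u|$ via the type~I estimate \eqref{typeI} and rescaling $z=(x-a)/\sqrt{T-t}$ produces the Gaussian tail $M\int_{|z|>k_1}e^{-|z|^2/4}\,dz$, which can be made smaller than $\varepsilon_0/2$ by choosing $k_1=k_1(N,p,M)$ large enough.

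For the inner region I would drop the Gaussian factor (bounded by~$1$) and relate the remaining $L^1$ integral of $|\tilde u|$ to $\int_{\Omega\cap B_{k_1\sqrt{T-t}}(a)}|u|^{q^*}\,dx$, using different tools depending on whether $q^*\ge 1$ or $q^*<1$. If $q^*\ge 1$, Hölder's inequality gives the bound $c_N^{1-1/q^*}k_1^{N(1-1/q^*)}\eta_1^{1/q^*}(T-t)^{N(1-1/q^*)/2}$. If $q^*<1$ (so $L^{q^*}$ is not a norm), I would instead invoke the pointwise bound $|u|\le M^{1-q^*}(T-t)^{-\beta(1-q^*)}|u|^{q^*}$ which follows directly from \eqref{typeI}, yielding the bound $M^{1-q^*}\eta_1(T-t)^{-\beta(1-q^*)}$. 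In either subcase, multiplying by the prefactor $(T-t)^{\beta-N/2}$ and using the crucial scaling identity $\beta q^*=N/2$ (which is precisely the definition of the critical exponent) makes the powers of $T-t$ cancel exactly, leaving a bound independent of~$t$.

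Hence, first fixing $k_1$ to control the outer tail, and then choosing $\eta_1=\eta_1(N,p,M)$ small enough to make the inner contribution less than $\varepsilon_0/2$, delivers the required contradiction. The only delicate point I foresee is the separation into the two regimes $q^*\ge 1$ and $q^*<1$, since in the subcritical-dimension case one cannot use Hölder; the type~I pointwise bound provides the required substitute. No further obstacles arise: the whole argument reduces to combining the blow-up exclusion criterion of Proposition~\ref{prop21}(i) with the scale invariance $\beta q^*=N/2$ of the natural rescaling of the equation.
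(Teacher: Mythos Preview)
Your proposal is correct and follows essentially the same approach as the paper: both arguments use the contrapositive of Proposition~\ref{prop21}(i) to get $\|w_a(s)\|_{L^1_\rho}\ge\varepsilon_0$, split the resulting integral into an inner ball of radius $k_1$ and its complement, bound the tail via the type~I estimate, and then relate the inner $L^1$ mass to the critical $L^{q^*}$ integral by H\"older when $q^*\ge 1$ and by the pointwise bound $|u|\le M^{1-q^*}(T-t)^{-\beta(1-q^*)}|u|^{q^*}$ when $q^*<1$, with the scaling identity $\beta q^*=N/2$ ensuring $t$-independence. The only cosmetic difference is that the paper carries out the computation in similarity variables $(s,y)$ (where the type~I bound reads simply $\|w\|_\infty\le M$), whereas you work directly in the original variables $(t,x)$; the two are equivalent.
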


\begin{proof}[Proof of Lemma~\ref{lem31}]
Let $a\in\overline\Omega$.  Recalling the notation in Section 2.1, we set 
$$w(s,y)=w_a(s,y)=(T-t)^\beta \tilde u(t,a+y\sqrt{T-t}),\qquad s_0<s<\infty,\ y\in \R^N,$$
where $\tilde u$ is the extension of $u$ by $0$ to the whole of $\R^N$ and $s_0=-\log T$.
By Proposition~\ref{prop21}(i), there exists $\eps_0=\eps_0(M,p,N)>0$ such that:
$$\quad\hbox{ if $\|  w(s)\|_{L^1_\rho}\le \eps_0$ for some $s\in [-\log T,\infty)$, then $a$ is not a blow-up point.}$$

Now choose $k_1>0$ such that $\int_{|y|> k_1} e^{-|y|^2/4}\,dy\le \eps_0/2M$. 
If $a$ is a blow-up point then,  using \eqref{typeIw}, we have for all $s \geq s_0$, 
$$\begin{aligned}
\eps_0
&\le \int_{\R^N} |w(s,y)|e^{-|y|^2/4}\,dy \\
&\le \int_{|y|\le k_1} |w(s,y)|e^{-|y|^2/4}\,dy+M\int_{|y|> k_1} e^{-|y|^2/4}\,dy
\le \int_{|y|\le k_1} |w(s,y)|\,dy+\eps_0/2,
\end{aligned}$$
hence
\be\label{proofLemA}
 \int_{|y|\le k_1} |w(s,y)|\,dy\ge \eps_0/2.
\ee 
Moreover, since $q^*=N(p-1)/2$, we have
$$
\int_{|x-a|\le k_1\sqrt{T-t}} |\tilde u(t,x)|^{q^*}\, dx
=(T-t)^{N/2}\int_{ |y|\le k_1} |\tilde u(t,a+y\sqrt{T-t})|^{q^*}\, dy 
=\int_{|y|\le k_1} |w(s,y)|^{q^*}\, dy. 
$$
If $p\ge 1+2/N$, hence $q^*\ge 1$, the conclusion then directly follows from \eqref{proofLemA} and H\"older's inequality.
If $p\in (1,1+2/N)$, hence $q^*\in (0,1)$, the conclusion follows from \eqref{typeIw} and \eqref{proofLemA}, by writing
$$\int_{|y|\le k_1} |w(s,y)|^{q^*}\, dy \ge M^{q^*-1} \int_{|y|\le k_1} |w(s,y)|\, dy \ge M^{q^*-1}\eps_0/2. $$
\end{proof}

\begin{proof}[Proof of Theorem~\ref{thm1}]
The proof is divided into five steps.  \\
{\bf Step 1.} {\it Preliminaries.}
We shall first prove the local assertion (ii). 
Let $a$ be a blow-up point of~$u$. We may take $T=1$ and $a=0$ without loss of generality.
As before, $u$ is extended by $0$ to the whole of $\R^N$ (we shall just write $u$ instead of $\tilde u$ without risk of confusion).
Also we shall denote $B_r=B_r(0)$.

Assume for contradiction that 
$$\liminf_{t\to 1}\int_{\Omega\cap B_{r_0}} |u(t,x)|^{q^*}dx<\infty$$
for some $r_0>0$.
Thus there exist $M_1>0$ and a sequence $t_n\to 1$ such that
\be\label{lowerA}
\|u(t_n)\|_{L^{q*}(B_{r_0})}\le M_1.
\ee 
Moreover, as a consequence of Lemma~\ref{lem31}, $u$ has finitely many blow-up points in the ball $B_{r_0}$. Therefore, there exists $r\in (0,r_0)$ 
such that $0$ is the only blow-up point in $B_r$ and, by standard parabolic estimates,
there exists $U\in L^{q*}(B_r)\cap L^\infty_{loc}(\overline B_r\setminus\{0\})$ such that
\be\label{cvtoU}
\lim_{t\to 1} u(t,x)=U(x), \quad\hbox{ for $0<|x|\le r$},
\ee
where the convergence is uniform on compact subsets of $\overline B_r\setminus\{0\}$.

\smallskip
{\bf Step 2.} {\it Rescaling and lower $L^{q^*}$estimate of the rescaled functions 
in some ball centered at $ 0 $.} We rescale $u$ as 
\be\label{defvn}
v_n(s,y)=(1-t_n)^\beta u\bigl(1+(1-t_n)s,y\sqrt{1-t_n}\bigr), \ \quad s\in(-2,0),\ y\in \R^N.
\ee 
The function $v_n$ satisfies
\be\label{eqnvn}
\partial_tv_n-\Delta v_n=|v_n|^{p-1}v_n,\quad s\in(-2,0),\ y\in \Omega_n:=(1-t_n)^{-1/2} (\Omega \cap B_r). 
\ee 
By the type~I estimate \eqref{typeI}, we have 
\be\label{estvn}
|v_n(s,y)|\le M|s|^{-\beta}, \ \quad s\in(-2,0),\ y\in \R^N.
\ee 
As $n\to \infty$, the domain $\Omega_n$ converges to $D:=\R^N$ if $0\in\Omega$ and to $D:=\R^N_+$ if $0\in\partial\Omega$
after suitable rotation if necessary. 
By parabolic estimates, it follows that, up to a subsequence, 
\be\label{cvlocvn}
\hbox{ $v_n$ converges locally uniformly in $(-2,0)\times \overline D$ to $v$},
\ee 
where $v$ is a classical solution of 
\be\label{eqnvD}
v_t-\Delta v=|v|^{p-1}v,\quad (s,y) \in (-2,0)\times D,
\ee
with $v=0$ on $(-2,0)\times \partial D$ in case $a=0\in\partial\Omega$.
In this second case, we extend $v$ by $0$ outside~$\overline D$.

Also, by Lemma~\ref{lem31}, since $0$ is a blow-up point of $u$, there exist $k_1,\eta_1>0$ such that
$$\begin{aligned}
\int_{|y|\le k_1} |v_n(s,y)|^{q^*}\, dy
&= (1-t_n)^{N/2}\int_{|y|\le k_1} |u(1+(1-t_n)s,y\sqrt{1-t_n})|^{q^*}\, dy\\ 
&=\int_{|x|\le k_1 \sqrt{1-t_n} } |u(1+(1-t_n)s,x)|^{q^*}\, dx\\
& \ge\int_{|x|\le k_1\sqrt{|s|(1-t_n)} } |u(1+(1-t_n)s,x)|^{q^*}\, dx \ge \eta_1,\quad  -1<s<0,
\end{aligned}$$
where we used $q^*=N(p-1)/2$. In view of \eqref{cvlocvn}, we deduce that 
\be\label{lowerbound}
\int_{|y|\le k_1} |v(s,y)|^{q^*}\, dy \ge \eta_1,\quad  -1<s<0.
\ee 

\smallskip
{\bf Step 3.} {\it Uniform a priori estimates of the rescaled functions $v_n$ for large $|y|.$}
We shall show that there exist $s_1\in (0,1)$ and $C_1, K_1>0$ such that, for all $K_2>K_1$ and all $n\ge n_1(K_2)$,
\be\label{convbetter}
|v_n(s,y)|\le C_1, \quad -s_1\le s<0,\ K_1\le |y|\le K_2.
\ee 
This step is more technical and requires some interplay between the different versions 
of the solution: $v_n$, $v$, $u$ and $w_b$, with a certain range of rescaling points $b$. 
In what follows, $ C $ denotes a generic constant varying from line to line.

\vskip 1mm
By \eqref{lowerA}, \eqref{defvn} and recalling $q^*=N(p-1)/2$, we have
$$\begin{aligned}
\int_{ |y|\le r/\sqrt{1-t_n} } |v_n(-1,y)|^{q^*}\, dy
&=(1-t_n)^{N/2}\int_{ |y|\le r/\sqrt{1-t_n}}|u(t_n,y\sqrt{1-t_n})|^{q^*}\, dy \\
&=\int_{B_{r}} |u(t_n,x)|^{q^*}\, dx\le M_1^{q^*},
\end{aligned}$$
hence
$v(-1,\cdot)\in L^{q*}(D)$ by Fatou's Lemma.
Moreover, we have $v(-1,\cdot)\in L^\infty(D)$
as a consequence of \eqref{estvn}. We may thus apply Proposition~\ref{prop21}(ii) to deduce the existence of $R>0$ such that
\be\label{boundC}
\sup_{(-1,0)\times (\overline D\cap \{|y|\ge R\})} |v|=:M_0<\infty.
\ee 
Note that, in view of \eqref{eqnvD} and \eqref{boundC}, by parabolic regularity, $v$ extends to a function
\be\label{extensionv}
v\in C^{1,2}\bigl([-1,0]\times (\overline D\cap \{|y|>R\})\bigr).
\ee
\vskip 1mm

On the other hand, by \eqref{cvlocvn} and 
\eqref{boundC}, for any $\eps\in(0,1)$ and $K>R$, there exists $n_0(\eps,K)$ such that for all $n\ge n_0(\eps,K)$, we have
$$|v_n(s,y)|\le M_0+1,\quad -1\le s\le -\eps, \ R\le |y|\le 3K,$$
hence in particular
$$(\eps(1-t_n))^\beta \bigl| u\bigl(1-\eps(1-t_n),y\sqrt{1-t_n}\bigr)\bigr |=\eps^\beta |v_n(-\eps,y)|\le (M_0+1)\eps^\beta,
\quad R\le |y|\le 3K.$$
For any $\xi\in \R^N$, this can be rewritten as
$$(\eps(1-t_n))^\beta \bigl| u\bigl(1-\eps(1-t_n),\xi\sqrt{1-t_n}+(y-\xi)\eps^{-1/2}\sqrt{\eps(1-t_n)}\bigr)\bigr |\le (M_0+1)\eps^\beta,
\quad R\le |y|\le 3K.$$
In terms of solutions rescaled by similarity variables (cf. section 2), this amounts to
$$\bigl| w_{\xi\sqrt{1-t_n}}\bigl(-\log(\eps(1-t_n)),(y-\xi)\eps^{-1/2}\bigr)\bigr |\le (M_0+1)\eps^\beta,
\qquad R\le |y|\le 3K.$$
In particular, for any $K\ge 2R$ and any $\xi$ such that $K\le |\xi|\le 2K$, we have,
for all $n\ge n_0(\eps,K)$, 
\be\label{boundxi}
\bigl| w_{\xi\sqrt{1-t_n}}\bigl(-\log(\eps(1-t_n)),z\bigr)\bigr| \le (M_0+1)\eps^\beta,
\qquad |z|\le K\eps^{-1/2}/2.
\ee
\vskip 1mm

Next take $q\ge p$ with $q>Np/2$  and let $\eps_0$ be as in Proposition~\ref{prop21}(i).
Recall that 
\be\label{boundMb}
\|w_b(\tau)\|_\infty\leq M\quad\hbox{ for any $\tau\in [s_0,\infty)$ and $b\in \R^N$,}
\ee
owing to \eqref{typeI}. From now on, we fix $\eps=\eps_1\in(0,1)$ and pick $R_1>2$, in such a way that 
\be\label{boundMb2}
(M_0+1)\eps_1^\beta \int_{\R^N} e^{-|z|^2/4}\,dz \le \frac{\eps_0}{2}
\quad\hbox{ and }\quad
M\int_{|z|> R_1} e^{-|z|^2/4}\,dz\le \frac{\eps_0}{2}.
\ee
Set $K_1:=2\max(R,\eps_1^{1/2}R_1)>4$ and take any $K\ge K_1$.
It follows from \eqref{boundxi}-\eqref{boundMb2} that,
for all $n\ge n_1(K):=n_0(\eps_1,K)$,
$$\begin{aligned}
\|w_{\xi\sqrt{1-t_n}}(-\log(\eps_1(1-t_n)),\cdot)\|_{L^1_\rho}&\le M\int_{|z|> R_1} e^{-|y|^2/4}\,dy \\
&\quad + (M_0+1)\eps_1^\beta \int_{|z|\le R_1} e^{-|y|^2/4}\,dy \le \eps_0,\qquad K\le |\xi|\le 2K.
\end{aligned}$$
By Proposition~\ref{prop21}(i) applied with $\sigma=\tau_n:=-\log(\eps_1(1-t_n))$, we deduce that 
\be\label{boundwxi}
\|w_{\xi\sqrt{1-t_n}}(\tau,\cdot)\|_{L^q_\rho}\le C \eps_0   e^{-\beta (\tau-\tau_n)},
\qquad \tau\ge \tau_n+s^*,\ \  K\le |\xi|\le 2K,\ \ n\ge n_1(K). 
\ee

Now set $t'_n:=1-\eps_1 e^{-s^*}(1-t_n)$. Restating \eqref{boundwxi} in terms of $u$, using 
$$u(t,b+z\sqrt{1-t}))=e^{\beta\tau}w_b(\tau,z), \quad\hbox{with $\tau=-\log(1-t)$,}$$ 
we thus have, for all $n\ge n_1(K)$, 
$$\int_{\R^N} \bigl|u(t,\xi\sqrt{1-t_n}+y\sqrt{1-t})\bigr|^q \rho(y)\, dy\le (C \eps_0)^q e^{\beta  q\tau_n}=C(1-t_n)^{-\beta q},
\ \quad t\in [t'_n,1),\ K\le |\xi|\le 2K.$$
Going back to $v_n$, noting that
$$v_n(s,\xi+y\sqrt{|s|})=(1-t_n)^\beta u\bigl(t,\xi\sqrt{1-t_n}+y\sqrt{1-t}\bigr),\quad\hbox{ with $t= 1+(1-t_n)s$,}$$
and that 
$$1+(1-t_n)s\ge t'_n=1-e^{-s^*}\eps_1(1-t_n)\Longleftrightarrow s\ge -e^{-s^*}\eps_1,$$
we obtain, for each $K\le |\xi|\le 2K$, 
$$\int_{|y|\le 1} \Bigl|v_n\bigl(s,\xi+y\sqrt{|s|}\bigr)\Bigr|^q \, dy\le e^{1/4}
\displaystyle\int_{\R^N} \Bigl|v_n\bigl(s,\xi+y\textstyle\sqrt{|s|}\bigr)\Bigr|^q \rho(y)\, dy\le C,
\qquad -\eps_1 e^{-s^*}\le s<0.$$
For any $K\ge K_1$ and each $\xi_0$ with $K+2\le |\xi_0|\le 2K-2$ (recalling $K_1>4$), 
we integrate with respect to $\xi\in B_2(\xi_0)$ and use the change of variable $z=\xi+y\sqrt{|s|}$. We thus get,
for all $n\ge n_1(K)$, 
$$\begin{aligned}
C 
&\ge \int_{|\xi-\xi_0|\le 2}\int_{|y|\le 1} \bigl|v_n\bigl(s,\xi+y\sqrt{|s|}\bigr)\bigr|^q \, dy\,d\xi \\
&= \int_{|y|\le 1} \int_{|z-y\sqrt{|s|}-\xi_0|\le 2}  \bigl|v_n(s,z)\bigr|^q \, dz\,dy
\ge \int_{|y|\le 1} \int_{|z-\xi_0|\le 1}  |v_n(s,z)|^q \, dz\,dy,
\end{aligned} $$
hence
$$\int_{|z-\xi_0|\le 1}  |v_n(s,z)|^q \, dz \le C,
\qquad -\eps_1 e^{-s^*}\le s<0. $$
By parabolic regularity applied to equation \eqref{eqnvn}, we deduce that, for all $K\ge  K_1$ and $n\ge n_1(K)$,
$$|v_n(s,y)|\le C,\quad\hbox{for all $s\in[-s_1,0)$ and $K+2\le |y|\le 2K-2$},$$
with $s_1:= (\eps_1/2)e^{-s^*}$, which implies the claim \eqref{convbetter}.

\medskip

{\bf Step 4.} {\it Backward uniqueness argument and completion of proof of assertion (ii).}
From \eqref{eqnvn} and \eqref{convbetter}, applying parabolic regularity again, we deduce that,
for any $K_2>K_1$, the limit
$$v_n(0,y)=\lim_{s\to 0} v_n(s,y) \quad\hbox{ exists for $K_1\le |y| \le K_2$ and $n\ge n_1(K_2)$}$$
and that
\be\label{cvvn}
\quad\hbox{ $v_n(0,y)\to v(0,y)$,\quad locally uniformly for $|y|\ge K_1$}
\ee 
(recalling \eqref{extensionv} and the fact that $v$ is extended by $0$ outside $D=\R^N_+$ if $a=0\in \partial\Omega$).
But, by \eqref{cvtoU} and \eqref{defvn}, taking $n_1(K_2)$ larger if necessary, we also have 
$$v_n(0,y)=(1-t_n)^\beta u\bigl(1,y\sqrt{1-t_n}\bigr)=(1-t_n)^\beta U(y\sqrt{1-t_n}),
\quad\hbox{ $K_1\le |y| \le K_2, \ \ n\ge n_1(K_2).$}$$ 
Since $U\in L^{q^*}(B_r)$, for each $y_0$ with $|y_0|\ge K_1+1$, we deduce that,
for all $n\ge n_1(|y_0|+1)$, 
$$
\begin{aligned}
\int_{B_1(y_0)} |v_n(0,y)|^{q^*}\,dy
&=\int_{B_1(y_0)} (1-t_n)^{N/2} |U(y\sqrt{1-t_n})|^{q^*}\, dy \\
&\le \int_{|z|\le (|y_0|+1)\sqrt{1-t_n}} |U(z)|^{q^*}\, dz \longrightarrow 0,\quad n\to\infty.
\end{aligned}
$$
Writing
$$\|v(0,\cdot)\|_{L^{q^*}(B_1(y_0))}
\le \|v(0,\cdot)-v_n(0,\cdot)\|_{L^{q^*}(B_1(y_0))}+ \|v_n(0,\cdot)\|_{L^{q^*}(B_1(y_0))}
$$
and since the first term also converges to $0$ by \eqref{cvvn}, we infer that
$$v(0,y)=0,\quad |y|\ge K_1.$$ 
By backward uniqueness (cf. Proposition~\ref{prop23backward-unique}), 
in view of \eqref{eqnvD}, \eqref{boundC}, \eqref{extensionv},
we deduce that
$$v(s,y)=0,\quad -1<s<0,\ |y|\ge K_1.$$ 
Finally, by unique continuation (e.g.~Theorem 4.1 of \cite{Escauriaza-Seregin-Sverak:UMN58}), 
it follows that actually $v\equiv 0$ on $(-1,0)\times D$.
We refer also to \cite{Escauriaza-Fernandez:AM41} and references therein for unique 
continuation theorem.
But this contradicts the lower bound in~\eqref{lowerbound}.
Assertion (ii) follows.

\smallskip
{\bf Step 5.} {\it Proof of assertion (i).}
We may assume that $u(t_0)\in L^{q*}(\Omega)$ for some $t_0\in (0,T)$
since otherwise the conclusion is immediate. 
 It follows from Propostion~\ref{prop21}(ii) that
$$|u(t,x)|\le C,\quad  t_0<t<T,\ x\in\overline\Omega\cap \{|x|\ge R\}$$
for some $C, R>0$.
Therefore, there exists at least a blow-up point $a\in\overline\Omega$.
The conclusion then follows from assertion (ii). 
\end{proof}

\section{Appendix: Type~II blow-up solutions with bounded critical $L^q$ norm}

Since the papers \cite{schweyer_jfa12} and \cite{dpmw18} did not  explicitly address the $L^{q^*}$ behavior of the solutions that they constructed,
we shall here check that they do satisfy property \eqref{noBUqcrit}.
\smallskip

\subsection{Case $p=p_S$ and $N=5$ (cf.~\cite{dpmw18})} 
Set
$$U(x)=c(N)(1+|x|^2)^{-(N-2)/2},$$ 
which is a regular positive solution of $\Delta u+u^p=0$ in $\R^N$ (the standard Aubin-Talenti bubble),
and denote its rescaling by $U_a(x)=a^{-2/(p-1)}U(a^{-1}x)$, $a>0$.
The type~II blow-up solution in \cite{dpmw18} (in the case of a single blow-up point) is of the form 
\be\label{formuU}
u(t,x)=U_{\lambda(t)}(x)+U^*(x)+\theta(t,x),\quad t\in (0,T),\ x\in \Omega,
\ee
where $\Omega$ is some smooth bounded domain of $\R^N$ with $0\in\Omega$.
Here $\lambda(t)\to 0$ as $t\to T$,
$U^* \in L^\infty(\Omega)$ and $\theta(t,\cdot) \to 0$ in $L^\infty(\Omega)$.
Since $q^*=N(p-1)/2=2N/(N-2)$ we have
$$\int_\Omega |U_{\lambda(t)}(x)|^{q^*}\, dx\le 
\lambda^{-N}(t)\int_{\R^N} |U(x/\lambda(t))|^{q^*}\, dx
=C\int_{\R^N} (1+|y|^2)^{-N}\, dy=C,
$$
where $ C $ denotes generic constants.
It follows that $u$ satisfies \eqref{noBUqcrit}.

\subsection{Case $p=p_S$ and $N=4$ (cf.~\cite{schweyer_jfa12})} 

The solution is also of the form~\eqref{formuU}, this time with $\Omega=\R^N$,
$\nabla U^* \in L^2(\R^N)$, $\nabla\theta(t,\cdot) \to 0$ in $L^2(\R^N)$, and $u_0 \in H^1(\R^N)$
(cf.~\cite[Theorem~1.1]{schweyer_jfa12}).
We note that $u(t,\cdot) \in H^1(\R^N)$ for each $t\in (0,T)$, due to the local well-posedness of \eqref{pbmP} in that space for $p=p_S$
(see e.g. \cite[Example~51.28]{QS07})). It then follows from the Sobolev inequality that
$$\begin{aligned}
\|u(t,\cdot)\|_{L^{q^*}(\R^N)} 
&\le \|U_{\lambda(t)}(\cdot)\|_{L^{q^*}(\R^N)}+\|u(t,\cdot)-U_{\lambda(t)}(\cdot)\|_{L^{q^*}(\R^N)} \\
&\le C+C\|\nabla u(t,\cdot)-\nabla U_{\lambda(t)}(\cdot)\|_{L^2(\R^N)} 
\le C\|\nabla U^*+\nabla\theta(t,\cdot)\|_{L^2(\R^N)} \le C,
\end{aligned}$$
and we conclude that $u$ also satisfies \eqref{noBUqcrit}.

\medskip

{\bf Acknowledgements.} 
NM was supported by the JSPS Grant-in-Aid for Scientific Research (B) (No.26287021).

\end{document}